\newtheorem{theorem}{Theorem}
\newtheorem{definition}[theorem]{Definition}
\newtheorem{proposition}[theorem]{Proposition}
\newtheorem{remark}[theorem]{Remark}
\newenvironment{proof}[1][Proof]{\noindent\textbf{#1.} }{\ \rule{0.5em}{0.5em}}
\begin{document}

\title{Bifurcation of periodic solutions from a ring configuration of discrete
nonlinear oscillators}
\author{
C. Garc\'{\i}a-Azpeitia
\and J. Ize
\\{\small Depto. Matem\'{a}ticas y Mec\'{a}nica, IIMAS-UNAM, FENOMEC, }
\\{\small Apdo. Postal 20-726, 01000 M\'{e}xico D.F. }
\\{\small cgazpe@hotmail.com}
\\{\small }
}

\maketitle

\begin{abstract}

This paper gives an analysis of the periodic solutions of a ring of $n$
oscillators coupled to their neighbors. We prove the bifurcation of branches
of such solutions from a relative equilibrium, and we study their
symmetries. We give complete results for a cubic Schr\"odinger potential and
for a saturable potential and for intervals of the amplitude of the
equilibrium. The tools for the analysis are the orthogonal degree and
representation of groups. The bifurcation of relative equilibria was given
in a previous paper.

\end{abstract}

\section{Introduction}

Consider a lattice of $n$ nonlinear oscillators, with a periodicity
condition or a ring of $n$ oscillators. The lattice may come from the
discretization of a nonlinear PDE, such as a nonlinear Schr\"odinger
equation, with different types of potentials. These equations have been used
very often as models of several phenomena of physics of waves, in particular
in nonlinear optics, see \cite{EJ03}.

In previous papers, we have studied the problems of bifurcation of relative
equilibria (see \cite{GaIz11}) and bifurcation of periodic solutions for
rings of masses (\cite{GaIz112}), vortices and filaments (\cite{GaIz113}).

Although these problems are different as models of physical phenomena, they
have in common several mathematical aspects, such as a similar linearization
and the same type of symmetries.

A relative equilibrium is a stationary solution of the system in a constant rotating
frame. In particular, we consider one where all oscillators have the same amplitude
but different phases. When these equilibria are localized they are called
breathers. In case of a periodic localized solution, one has a
quasi-periodic breather, see \cite{EJ03} or \cite{Jo04}.

Numerical studies of the bifurcation of relative equilibria for a circular
lattice and a defect in one of the oscillators, for $n=7$, is given in \cite%
{PD09}. For the case of a Dirichlet condition, instead of the periodic
problem, one has a very complete numerical analysis of bifurcation of
breathers in \cite{Pan10}.

The linearization of the system at a critical point is a $2{n}\times2{n}$
matrix, which is non invertible, due to the rotational symmetry. These facts
imply that the study of the spectrum of the linearization is not an easy
task and that the classical bifurcation results for periodic solutions may
not be applied directly. However, we shall use the change of variables
proved in our previous paper, \cite{GaIz11}, in order to give not only this
spectrum but also the consequences for the symmetries of the solutions.

The present paper is a continuation of \cite{GaIz11}. Thus, we shall use the
results in that paper, but we shall recall all the important notions. In our
parallel papers, \cite{GaIz112} and \cite{GaIz113}, we study similar
problems for point masses and vortices. Although there are many
similarities, in particular in the change of variables, the results are of a
quite different nature.

The next section is devoted to the mathematical setting of the problem, with
the symmetries involved. Then, we give, in the following two sections, the
preliminary results needed in order to apply the orthogonal degree theory
developed in \cite{IzVi03}, that is the global Liapunov-Schmidt reduction,
the study of the irreducible representations, with the change of variables
of \cite{GaIz11}, and the symmetries associated to these representations. In
the next section, we prove our bifurcation results and, in the following
section, we give the analysis of the spectrum, with the complete results on
the type of solutions which bifurcate from the relative equilibrium.

\section{Setting the problem}

Let us denote by $q_{j}(t)\in \mathbb{C}$ the $j$'th oscillator, for $j\in
\{1,...,n\}$. The dNLS equations are
\begin{equation*}
i\dot{q}_{j}=h(\left\Vert q_{j}\right\Vert
^{2})q_{j}+(q_{j+1}-2q_{j}+q_{j-1})\text{,}
\end{equation*}%
where $h$ is the nonlinear potential. We wish to study a finite circular
lattice, that is, a lattice of oscillators for $j\in \{1,...,n\}$, with
periodic conditions $q_{j}=q_{j+n}$.

The solutions of the form $q_{j}=e^{\omega ti}u_{j}$, with $u_{j}$ constant,
are called relative equilibria. In order to obtain the amplitude as a
parameter, we need to change coordinates, with $q_{j}=\mu e^{\omega ti}u_{j}$%
. In this manner, we have that the values $u_{j}$ form a relative
equilibrium when%
\begin{equation*}
-\omega u_{j}=h(\left\vert \mu u_{j}\right\vert
^{2})u_{j}+(u_{j+1}-2u_{j}+u_{j-1})\text{.}
\end{equation*}

\begin{remark}
Given that the lattice is integrable for $n=1$ and $n=2$, we shall look for
bifurcation for $n\geq3$.
\end{remark}

The starting point is a relative equilibrium which looks like a rotating
wave. We give a condition which needs to be satisfied by the potential for
the existence of this rotating wave.

\begin{proposition}
Define $a_{j}=e^{ij\zeta}$, with $\zeta=2\pi/n$, then $\bar{a}%
=(a_{1},...,a_{n})$ is a relative equilibrium if%
\begin{equation*}
\omega=4\sin^{2}(\zeta/2)-h(\mu^{2})\text{.}
\end{equation*}
\end{proposition}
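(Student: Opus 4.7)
The plan is a direct substitution and verification, since the proposition is essentially stating that the Fourier mode $u_j=e^{ij\zeta}$ diagonalizes the discrete Laplacian appearing in the equation
\[
-\omega u_j=h(|\mu u_j|^2)u_j+(u_{j+1}-2u_j+u_{j-1}).
\]

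First I would plug in $u_j=a_j=e^{ij\zeta}$. Since $|a_j|=1$, the nonlinear term collapses to $h(\mu^2)e^{ij\zeta}$, which removes all $j$-dependence from the potential contribution; this is really the only place where the assumption of \emph{equal-amplitude} rotating waves is used.

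Next I would compute the discrete Laplacian on the mode. Factoring out $e^{ij\zeta}$,
\[
a_{j+1}-2a_j+a_{j-1}=e^{ij\zeta}\bigl(e^{i\zeta}-2+e^{-i\zeta}\bigr)=-4\sin^{2}(\zeta/2)\,e^{ij\zeta},
\]
using $e^{i\zeta}+e^{-i\zeta}=2\cos\zeta$ and the half-angle identity $1-\cos\zeta=2\sin^{2}(\zeta/2)$. Note that the periodic boundary condition $a_{j+n}=a_j$ is automatic because $\zeta=2\pi/n$, so this calculation is valid for every $j\in\{1,\dots,n\}$ without a separate boundary check.

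Finally, dividing both sides of the equation by the common factor $e^{ij\zeta}\neq 0$ yields the scalar identity $-\omega=h(\mu^2)-4\sin^{2}(\zeta/2)$, which rearranges to the claimed $\omega=4\sin^{2}(\zeta/2)-h(\mu^2)$. There is no genuine obstacle in the argument; the only subtlety worth flagging is that the identity must hold uniformly in $j$, and this is exactly why the constant-modulus ansatz is needed so that $h(|\mu u_j|^2)$ does not depend on $j$.
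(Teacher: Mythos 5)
Your direct substitution is correct: the constant modulus makes the nonlinear term $h(\mu^2)e^{ij\zeta}$, the discrete Laplacian acts on the mode $e^{ij\zeta}$ by the scalar $-4\sin^2(\zeta/2)$, and dividing by $e^{ij\zeta}$ gives exactly the stated relation for $\omega$. The paper omits the proof (deferring to \cite{GaIz11} with the remark that it is simple), and your argument is precisely that simple verification, so there is nothing to add.
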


The proof is simple and given in \cite{GaIz11}.

For the study of periodic solutions, we need to change to rotating
coordinates: $q_{j}=\mu e^{\omega ti}u_{j}$, and the equation becomes%
\begin{equation*}
i\dot{u}_{j}-\omega u_{j}=h(\left\vert \mu u_{j}\right\vert
^{2})u_{j}+(u_{j+1}-2u_{j}+u_{j-1}).
\end{equation*}
Changing to real coordinates, one has, for $u_{j}\in\mathbb{R}^{2}$%
\begin{equation*}
J\dot{u}_{j}=\omega u_{j}+h(\left\vert \mu u_{j}\right\vert
^{2})u_{j}+(u_{j+1}-2u_{j}+u_{j-1})\text{,}
\end{equation*}
where $J$ is the standard symplectic matrix.

Set $u=(u_{1},...,u_{n})^{T}$ be the vector of positions and let $\mathcal{J}%
=\emph{diag}(J,...,J)$, then the vectorial form of the equation is%
\begin{align*}
\mathcal{J}\dot{u} & =\nabla V(u)\text{ with} \\
V & =\frac{1}{2}\sum_{j=1}^{n}\left\{ H(u_{j})-\left\vert
u_{j+1}-u_{j}\right\vert ^{2}\right\} ,
\end{align*}
where $H(u)$ is the function which satisfies $\nabla H(u)=\omega
u+h(\left\vert \mu u\right\vert ^{2})u$.

Since we are looking for periodic solutions, let us define $x(t)=u(t/\nu )$.
Then, $2\pi /\nu $ solutions for $u$ are $2\pi $ solutions for $x$ of the
equation%
\begin{equation*}
f(x)=-\nu \mathcal{J}\dot{x}+\nabla V(x)=0\text{.}
\end{equation*}%
The operator $f$ is defined from $H_{2\pi }^{1}(\mathbb{R}^{2n})$ into $%
L_{2\pi }^{2}(\mathbb{R}^{2n})$.

\begin{definition}
Let $S_{n}$ be the group of permutations of $\{1,...,n\}$. One defines the
action of $S_{n}$ in $\mathbb{R}^{2n}$ as%
\begin{equation*}
\rho(\gamma)(x_{1},...,x_{n})=(x_{\gamma(1)},...,x_{\gamma(n)}),
\end{equation*}
and the action of  $\theta\in SO(2)$ as
\begin{equation*}
\rho(\theta)x=e^{-\mathcal{J}\theta}x\text{.}%
\end{equation*}

\end{definition}

Let $\mathbb{Z}_{n}$ be the subgroup of permutations
generated by $\zeta(j)=j+1$ modulus $n$. The gradient
$\nabla V$ is $\mathbb{Z}_{n}\times SO(2)$-equivariant, that is it commutes with the action of the group, and
 the map $f$ is $\Gamma\times S^{1}$-equivariant with the abelian group%
\begin{equation*}
\Gamma=\mathbb{Z}_{n}\times SO(2),
\end{equation*}
where the action of $S^{1}$ is by time translation.

Now, the infinitesimal generators of $S^{1}$ and $\Gamma$ are%
\begin{equation*}
\text{ }Ax=\frac{\partial}{\partial\varphi}|_{\varphi=0}x(t+\varphi)=\dot {x}%
\text{ and }A_{1}x=\frac{\partial}{\partial\theta}|_{\theta=0}e^{-\mathcal{%
J\theta}}x=-\mathcal{J}x\text{.}
\end{equation*}
Since $V$ is $\Gamma$-invariant, then the gradient $\nabla V(x)$ must be
orthogonal to the generator $A_{1}x$. As a consequence, the map $f$ must be $%
\Gamma\times S^{1}$-orthogonal, due to the equalities

\begin{align*}
\left\langle f(x),\dot{x}\right\rangle & =-\nu\left\langle \mathcal{J}\dot{x}%
,\dot{x}\right\rangle +V(x)|_{0}^{2\pi}=0 \\
\left\langle f(x),-\mathcal{J}x\right\rangle & =\nu\frac{1}{2}\left\langle
x,x\right\rangle |_{0}^{2\pi}-\left\langle \mathcal{J}x,\nabla
V(x)\right\rangle =0\text{.}
\end{align*}

Define $\mathbb{\tilde{Z}}_{n}$ as the subgroup of $\Gamma$ generated by $%
(\zeta,\zeta)\in\mathbb{Z}_{n}\times SO(2)$ with $\zeta=2\pi/n\in SO(2)$.
Since the action of $(\zeta,\zeta)$ leaves fixed the equilibrium $\bar{a}$,
then the isotropy group of $\bar{a}$ is the group ${\Gamma}_{\bar{a}}\times
S^{1}$ with%
\begin{equation*}
\Gamma_{\bar{a}}=\mathbb{\tilde{Z}}_{n}.
\end{equation*}
Thus, the orbit of $\bar{a}$ is isomorphic to the group $SO(2)$. In fact,
the orbit consists of the rotations of the equilibrium. As a consequence,
the generator of the orbit $A_{1}\bar{a}=-\mathcal{J}\bar{a}$ must be in the
kernel of $D^{2}f(\bar{a})$.

\section{The Liapunov-Schmidt reduction}

In order to apply the orthogonal degree of \cite{IzVi03}, one needs to make
a reduction of the bifurcation map to some finite space.

The bifurcation map $f$ has Fourier series%
\begin{equation*}
f(x)=\sum_{l\in \mathbb{Z}}(-l\nu i\mathcal{J}x_{l}+g_{l})e^{ilt}\text{,}
\end{equation*}%
where $x_{l}$ and $g_{l}$ are the Fourier modes of $x$ and $\nabla V(x)$.
Since $-il\nu (i\mathcal{J)}$ is invertible for all big $l\nu$'s, then one may
solve $x_{l}$ for $\left\vert l\right\vert >p$ and $\nu$ bounded from below,
 whenever $x(t)$ belongs to a
bounded set $\Omega $ in the space $H^{1}$, that is also bounded uniformly
in $\mathbb{R}^{2}$.

In this way, the bifurcation operator $f$ has the same zeros as the
bifurcation function
\begin{equation*}
f(x_{1},x_{2}(x_{1},\nu),\nu)=\sum_{\left\vert l\right\vert \leq p}(-l\nu i%
\mathcal{J}x_{l}+g_{l})e^{ilt}\text{,}
\end{equation*}
and the linearization of the bifurcation function at some equilibrium $\bar {%
a}$ is%
\begin{equation*}
f^{\prime}(\bar{a})x_{1}=\sum_{\left\vert l\right\vert \leq p}\left( - l\nu i%
\mathcal{J}+D^{2}V(\bar{a})\right) x_{l}e^{ilt}\text{.}
\end{equation*}
Here $x_{1}$ corresponds to the $2p+1$ first Fourier modes and $%
x_{2}(x_{1},\nu)$ is the result of applying the global implicit function
theorem for functions in $\Omega$ and $\nu$ bounded from below.

The linearization of the bifurcation function is determined by blocks $%
M(l\nu)$ for $l\in\{0,...,p\}$, where $M(\nu)$ is the matrix

\begin{equation*}
M(\nu)=-\nu i\mathcal{J}+D^{2}V(\bar{a})\text{.}
\end{equation*}
These blocks $M(l\nu)$ represent the Fourier modes of the linearized
equation at the equilibrium.

\section{Irreducible representations}

In order to apply the orthogonal degree, one needs to find the irreducible
representation subspaces for the action of $\Gamma_{\bar{a}}=\mathbb{\tilde {%
Z}}_{n}$.

For $k\in\{1,...,n\}$, we define the isomorphisms $T_{k}:\mathbb{C}%
^{2}\rightarrow W_{k}$ as%
\begin{align*}
T_{k}(w) & =(n^{-1/2}e^{(ikI+J)\zeta}w,...,n^{-1/2}e^{n(ikI+J)\zeta}w)\text{
with} \\
W_{k} & =\{(e^{(ikI+J)\zeta}w,...,e^{n(ikI+J)\zeta}w):w\in\mathbb{C}^{2}\}%
\text{.}
\end{align*}

In the paper \cite{GaIz11}, we have proved that the subspaces $W_{k}$ are
irreducible representations of the group $\mathbb{\tilde{Z}}_{n}$.
 Also, we showed that the action of $(\zeta,\zeta)\in\mathbb{\tilde{Z}}_{n}$
on the space $W_{k}$ is given by
\begin{equation*}
\rho(\zeta,\zeta)=e^{ik\zeta}\text{.}
\end{equation*}

Since the subspaces $W_{k}$ are orthogonal, then the linear map%
\begin{equation*}
Pw=\sum_{j=1}^{n}T(w_{k})
\end{equation*}
is orthogonal, where $w=(w_{1},...,w_{n})$.

Since the map $P$ rearranges the coordinates of the irreducible
representations, one has, from Schur's lemma, that%
\begin{equation*}
P^{-1}D^{2}V(\bar{a})P=diag(B_{1},...,B_{n})\text{,}
\end{equation*}
where $B_{k}$ are matrices which satisfy $D^{2}V(\bar{a}%
)T_{k}(w)=T_{k}(B_{k}w)$. In the paper \cite{GaIz11}, we have found the
blocks $B_{k}$: they satisfy $B_{n-k}=\bar{B}_{k}$ and we had the following
result:

Define $\alpha_{k}$\ and $\gamma_{k}$\ as%
\begin{equation*}
\alpha_{k}=4\cos\zeta\sin^{2}k\zeta/2\text{ and }\gamma_{k}=2\sin k\zeta
\sin\zeta\text{.}
\end{equation*}
Then, the blocks $B_{k}$ are%
\begin{equation*}
B_{k}=-\alpha_{k}I+\gamma_{k}(iJ)+2\mu^{2}h^{\prime}(\left\vert
\mu\right\vert ^{2})diag(1,0).
\end{equation*}

For the linearization of the equation one has that%
\begin{equation*}
P^{-1}M(\nu)P=diag(m_{1}(\nu),...,m_{n}(\nu))\text{.}
\end{equation*}
Thus, we find the matrices $m_{k}(\nu)$ in terms of the blocks $B_{k}$ as

\begin{equation*}
m_{k}(\nu)=-\nu(iJ)+B_{k}\text{ for }k\in\{1,...,n\}\text{.}
\end{equation*}

The action of $(\zeta,\zeta,\varphi)\in\mathbb{\tilde{Z}}_{n}\times S^{1}$
on $W_{k}$ is $\rho(\zeta,\zeta,\varphi)=e^{ik\zeta}e^{il\varphi}$.
Therefore, the isotropy group of the space $W_{k}$ is
\begin{equation*}
\mathbb{Z}_{n}(k)=\left\langle \left( \zeta,\zeta,-k\zeta\right)
\right\rangle .
\end{equation*}

\section{Bifurcation theorem}

The orthogonal degree is defined for orthogonal maps that are non-zero on
the boundary of some open bounded invariant set. The degree is made of
integers, one for each orbit type, and it has all the properties of the
usual Brouwer degree. Hence, if one of the integers is non-zero, then the
map has a zero corresponding to the orbit type of that integer. In addition,
the degree is invariant under orthogonal deformations that are non-zero on
the boundary. The degree has other properties such as sum, products and
suspensions. For instance, the degree of two pieces of the set is the sum of
the degrees. The interested reader may consult \cite{IzVi03}, \cite{BaKrSt06} and
\cite{Ry05} for more details on equivariant degree and degree for gradient maps.

Now, if one has an isolated orbit, then its linearization at one point of
the orbit $x_{0}$ has a block diagonal structure, due to Schur's lemma,
where the isotropy subgroup of $x_{0}$ acts as $\mathbb{Z}_{n}$ or as $S^{1}$%
. Therefore, the orthogonal index of the orbit is given by the signs of the
determinants of the submatrices where the action is as $\mathbb{Z}_{n}$, for
$n=1$ and $n=2$, and the Morse indices of the submatrices where the action
is as $S^{1}$. In particular, for problems with a parameter, if the
orthogonal index changes at some value of the parameter, one will have
bifurcation of solutions with the corresponding orbit type. Here, the
parameter is the frequency $\nu$.

The fixed point subspace of the isotropy group $\Gamma_{\bar{a}}\times S^{1}$
corresponds to the block $m_{n}(0)=B_{n}$. Since the generator of the kernel
is $A_{1}\bar{a}=T_{n}(-n^{1/2}e_{2})$, then $e_{2}$ must be in the kernel
of $m_{n}(0)$.

Following \cite{IzVi03}, one defines $\sigma$ to be the sign of $m_{n}(0)$
in the orthogonal subspace to $e_{2}$. Since $B_{n}=\mu^{2}h^{\prime}(\mu
^{2})diag(1,0)$, then%
\begin{equation*}
\sigma=sgn(e_{1}^{T}B_{n}e_{1})=sgn(h^{\prime}(\mu^{2})).
\end{equation*}
We have proved, in \cite{GaIz11}, that $m_{k}(0)=B_{k}$ is invertible except
for a point $\mu_{k}$ for $k=1,..,n-1$, solution of $\mu^{2}h^{\prime}(\mu
^{2})=\delta_{k}$, with%
\begin{equation*}
\delta_{k}=(\alpha_{k}^{2}-\gamma_{k}^{2})/(2\alpha_{k}).
\end{equation*}

\begin{definition}
Following \cite{IzVi03}, we define%
\begin{equation*}
\eta_{k}(\nu_{0})=\sigma\{n_{k}(\nu_{0}-\rho))-n_{k}(\nu_{0}+\rho)\}\text{,}
\end{equation*}
where $n_{k}(\nu)$ is the Morse index of $m_{k}(\nu)$.
\end{definition}

This number corresponds to the jump of the orthogonal index at $\nu_{0}$.
Then, from the results of \cite{IzVi03}, we can state the following theorem
for $\mu\neq\mu_{1},...,\mu_{n-1}$.

\begin{theorem}
If $\eta_{k}(\nu_{k})$ is different from zero, then the relative equilibrium
has a global bifurcation of periodic solutions from $2\pi/\nu_{k}$ with
isotropy group $\mathbb{\tilde{Z}}_{n}(k)$.
\end{theorem}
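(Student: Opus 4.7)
The plan is to apply the global bifurcation theorem for $\Gamma\times S^{1}$-orthogonal maps from \cite{IzVi03} to the finite-dimensional Liapunov-Schmidt reduction constructed in the previous sections. Because $f$ is orthogonal and $f^{\prime}(\bar{a})$ splits, via $P$ together with the Fourier decomposition, into the $2\times 2$ blocks $m_{k}(l\nu)$ acting on the isotypic components $W_{k}$ at Fourier mode $l$, the orthogonal index of the trivial branch $(\bar{a},\nu)$ can be read directly from the Morse data of these blocks, and bifurcation follows from any jump of that index as $\nu$ crosses a critical value.

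First, one singles out the contribution of the $\Gamma_{\bar{a}}\times S^{1}$-fixed subspace: the kernel direction along the orbit, $A_{1}\bar{a}=T_{n}(-\sqrt{n}\,e_{2})$, is factored out of the block $m_{n}(0)=B_{n}$, leaving only the entry $e_{1}^{T}B_{n}e_{1}=\mu^{2}h^{\prime}(\mu^{2})$, whose sign $\sigma$ is the orientation of the orthogonal degree on the fixed-point part. Next, because $\mu\neq\mu_{1},\dots,\mu_{n-1}$, all the zero-mode blocks $m_{k}(0)=B_{k}$ are invertible, so as $\nu$ varies the orthogonal index of $(\bar{a},\nu)$ can only change at a value where some block $m_{k}(\nu)$ becomes singular, namely $\nu=\nu_{k}$. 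On $W_{k}$ with first harmonic $l=1$ the group $\mathbb{\tilde{Z}}_{n}\times S^{1}$ acts through $\rho(\zeta,\zeta,\varphi)=e^{ik\zeta}e^{i\varphi}$, so the isotropy subgroup of any such mode is precisely $\mathbb{Z}_{n}(k)=\langle(\zeta,\zeta,-k\zeta)\rangle$.

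The orthogonal bifurcation formula of \cite{IzVi03}, applied to this orbit type, then yields exactly the invariant
\begin{equation*}
\eta_{k}(\nu_{k})=\sigma\{n_{k}(\nu_{k}-\rho)-n_{k}(\nu_{k}+\rho)\},
\end{equation*}
i.e.\ the product of the fixed-point sign and the jump of the Morse index of $m_{k}(\nu)$ across $\nu_{k}$. If $\eta_{k}(\nu_{k})\neq 0$, the orthogonal degree on any small invariant neighbourhood of $(\bar{a},\nu_{k})$ in the reduced problem is nonzero, producing zeros of $f$ with isotropy $\mathbb{\tilde{Z}}_{n}(k)$. The Rabinowitz-type global alternative built into the orthogonal degree finally upgrades this local statement: the connected component of such zeros in $H_{2\pi}^{1}\times\mathbb{R}_{+}$ is either unbounded, exits the Liapunov-Schmidt domain $\Omega$, or returns to the trivial branch at another critical frequency.

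The main obstacle I expect is the careful bookkeeping between the frequency $\nu$ and the Fourier harmonic $l$: the singularity of $m_{k}(l\nu)$ at $l\nu=\nu_{k}$ a priori produces jumps of the orthogonal index at every $\nu=\nu_{k}/l$, yet only $l=1$ gives isotropy exactly $\mathbb{\tilde{Z}}_{n}(k)$, while larger $l$ correspond to subharmonic branches whose isotropy is strictly larger. Disentangling these contributions and verifying that no cancellation occurs when several blocks $m_{j}$ become singular at the same value of $\nu$ is the delicate part; this is handled by the fact that the different $W_{k}$ are mutually orthogonal representations, so their contributions to the orthogonal index add independently and the formula for $\eta_{k}(\nu_{k})$ is unambiguous.
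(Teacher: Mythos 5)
Your proposal is correct and follows essentially the same route as the paper, which itself proves this theorem by assembling the Liapunov--Schmidt reduction, the isotypic decomposition into the blocks $m_{k}(\nu)$, the sign $\sigma$ from the fixed-point block $B_{n}$, and then invoking the orthogonal-degree bifurcation and global-alternative results of \cite{IzVi03}. The extra bookkeeping you flag (harmonics $l>1$ versus $l=1$ and non-cancellation across orthogonal isotypic components) is exactly what the cited theory handles, so there is no gap.
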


The solutions with isotropy group $\mathbb{\tilde{Z}}_{n}(k)$ must satisfy
the symmetries

\begin{equation*}
u_{j+1}(t)=e^{\ ij\zeta}u_{1}(t+jk\zeta).
\end{equation*}

The norms of these oscillators are related by the formula $%
r_{j+1}(t)=r_{1}(t+jk\zeta)$, in particular, for $k=n$, all oscillate in an
identical fashion. If $k$ divides $n$, then one will have $k$ equal
traveling waves, each one formed by $n/k$ oscillators.

By global bifurcation, we mean that the branch goes to infinity in norm or
period (for these possibilities we say that the bifurcation is non
admissible) or, if not, then the sum of the above jumps, over all the
bifurcation points, is zero.

A complete description of these solutions may be found in the paper \cite%
{GaIz112}.

\section{Spectral analysis}

The $m_{k}(\nu)$ have real eigenvalues, so the matrices $m_{n-k}(\nu)$ and
$m_{k}(-\nu)$ have the same spectrum due to the equality $m_{n-k}(\nu)=\bar
{m}_{k}(-\nu)$. As a consequence, the Morse numbers satisfy
\[
n_{n-k}(\nu)=n_{k}(-\nu).
\]
For $n=4$, one has that $\alpha_{k}=0$ and the determinant of $m_{k}(\nu)$ is
$-(\gamma_{k}-\nu)^{2}$, and there is no jump in the Morse number. Note also
that, for $k=n$, one has $\alpha_{n}=\gamma_{n}=0$ and the determinant is
$-\nu^{2}$, that is $m_{n}(\nu)$ is invertible for $\nu\neq0$ and there is no
bifurcation of periodic solutions with that symmetry.

Since we are going to treat the case $n=3$ separately, we shall suppose for
now that $n\geq5$. For these values of $n$ one has that $\alpha_{k}>0$ and
$\delta_{k}<\alpha_{k}/2$.

\begin{proposition}
Define $\nu_{\pm}$ as
\[
\nu_{\pm}=\gamma_{k}\pm\sqrt{\alpha_{k}\left(  \alpha_{k}-2\mu^{2}h^{\prime
}(\mu^{2})\right)  }\text{.}%
\]
For $n\geq5$, the matrix $m_{k}(\nu)$ changes Morse index only for $\nu_{\pm}
$, whenever $\mu^{2}h^{\prime}(\mu^{2})<\alpha_{k}/2$. In this case%
\[
\eta_{k}(\nu_{\pm})=\pm\sigma\text{,}%
\]
with $\sigma=sgn(h^{\prime}(\mu^{2}))$. Furthermore, the values $\nu_{\pm}$
are positive only in the following cases:

\begin{description}
\item[(a)] The value $\nu_{+}$ is positive for $k\in\{1,...,n-1\}$ and
$\mu^{2}h^{\prime}(\mu^{2})<\delta_{k}$.

\item[(b)] The values $\nu_{+}$ and $\nu_{-}$ are positive for $k\in
\{1,..,[n/2]\}$ and $\delta_{k}<\mu^{2}h^{\prime}(\mu^{2})<\alpha_{k}/2$.
\end{description}
\end{proposition}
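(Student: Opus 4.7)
The plan is to exploit the fact that each $m_{k}(\nu)=-\alpha_{k}I+(\gamma_{k}-\nu)(iJ)+2\beta\,\mathrm{diag}(1,0)$, where I abbreviate $\beta=\mu^{2}h^{\prime}(\mu^{2})$, is merely a $2\times 2$ Hermitian matrix, so I can read off everything from its trace and determinant:
\begin{equation*}
\mathrm{tr}\,m_{k}(\nu)=-2(\alpha_{k}-\beta),\qquad \det m_{k}(\nu)=\alpha_{k}(\alpha_{k}-2\beta)-(\gamma_{k}-\nu)^{2}.
\end{equation*}
The determinant has real roots exactly when $\beta\leq\alpha_{k}/2$ (using $\alpha_{k}>0$, which holds for $n\geq 5$), and they are precisely $\nu_{\pm}=\gamma_{k}\pm\sqrt{\alpha_{k}(\alpha_{k}-2\beta)}$. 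If instead $\beta>\alpha_{k}/2$, the determinant is strictly negative for every $\nu$, so $m_{k}(\nu)$ is invertible and its Morse index is constant; this already shows that no Morse jumps can occur outside the listed values.

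For $\beta<\alpha_{k}/2$ strictly, the trace is strictly negative, so the two real eigenvalues of $m_{k}(\nu)$ sum to a negative number. On the open interval $(\nu_{-},\nu_{+})$ the determinant is positive, forcing both eigenvalues negative and Morse index equal to $2$; outside this interval the determinant is negative, giving one positive and one negative eigenvalue, hence Morse index $1$. Thus $n_{k}(\nu_{-}-\rho)-n_{k}(\nu_{-}+\rho)=-1$ and $n_{k}(\nu_{+}-\rho)-n_{k}(\nu_{+}+\rho)=+1$, and multiplying by $\sigma$ yields the asserted identity $\eta_{k}(\nu_{\pm})=\pm\sigma$.

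Finally I would determine when $\nu_{\pm}$ is positive. The key algebraic identity is
\begin{equation*}
\alpha_{k}(\alpha_{k}-2\beta)-\gamma_{k}^{2}=2\alpha_{k}(\delta_{k}-\beta),
\end{equation*}
which compares the size of the square-root with $|\gamma_{k}|$. In case (a), $\beta<\delta_{k}$ forces the square-root to exceed $|\gamma_{k}|$, so $\nu_{+}>0$ irrespective of the sign of $\gamma_{k}$, while $\nu_{-}<0$; this holds for every $k\in\{1,\ldots,n-1\}$. In case (b), $\delta_{k}<\beta<\alpha_{k}/2$ gives $|\gamma_{k}|>\sqrt{\alpha_{k}(\alpha_{k}-2\beta)}$, so $\nu_{-}>0$ only when $\gamma_{k}>0$; since $\gamma_{k}=2\sin(k\zeta)\sin\zeta$, this happens exactly for $k\in\{1,\ldots,[n/2]\}$ (the boundary case $k=n/2$ with $n$ even makes $\gamma_{k}=0$ and forces $\delta_{k}=\alpha_{k}/2$, so the interval in case (b) is empty and no inconsistency arises), and then $\nu_{+}\geq\nu_{-}>0$.

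The bulk of the argument is a routine $2\times 2$ eigenvalue computation; the only step that requires care is the bookkeeping of signs in the last paragraph, namely tracking $\mathrm{sgn}(\gamma_{k})$ as a function of $k$ and comparing the radical to $|\gamma_{k}|$. That comparison is the main obstacle, and the identity $\alpha_{k}(\alpha_{k}-2\beta)-\gamma_{k}^{2}=2\alpha_{k}(\delta_{k}-\beta)$ is the single tool that makes the case analysis mechanical.
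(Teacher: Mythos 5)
Your proposal is correct and follows essentially the same route as the paper: read off the Morse index from the trace and determinant of the $2\times 2$ Hermitian block $m_{k}(\nu)$, and decide the signs of $\nu_{\pm}$ from the sign of $d_{k}(0)=2\alpha_{k}(\delta_{k}-\beta)$ together with the sign of $\gamma_{k}$. Your explicit treatment of the boundary case $k=n/2$ (where $\gamma_{k}=0$ forces $\delta_{k}=\alpha_{k}/2$ and empties the interval in (b)) is a small point of extra care not spelled out in the paper, but otherwise the arguments coincide.
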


\begin{proof}
Since the determinant of $m_{k}$ is%
\[
d_{k}(\nu)=\det m_{k}=-2\alpha_{k}\mu^{2}h^{\prime}(\mu^{2})+\alpha_{k}%
^{2}-(\gamma_{k}-\nu)^{2}\text{,}%
\]
then $d_{k}(\nu)$ is zero only for $\nu_{\pm}$, if $\mu^{2}h^{\prime}(\mu
^{2})<\alpha_{k}/2$. Furthermore, the trace of $m_{k}(\nu)$ is
\[
T_{k}=2\mu^{2}h^{\prime}(\left\vert \mu\right\vert ^{2})-2\alpha_{k}%
<-\alpha_{k}<0\text{.}%
\]
Since $d_{k}(\nu)$ is positive in $(\nu_{-},\nu_{+})$ and negative in the
complement, then $n_{k}(\nu)=2$ in $(\nu_{-},\nu_{+})$ and $n_{k}(\nu)=1$ in
the complement. Thus, $\eta_{k}(\nu_{-})=\sigma(1-2)$ and $\eta_{k}(\nu
_{+})=\sigma(2-1)$.

Since $d_{k}(\nu)$ is a polynomial of degree two, then $\nu_{+}$ is positive
and $\nu_{-}$ is negative, if $d_{k}(0)$ is positive. We conclude the result
for (a), that is $d_{k}(0)$ is positive for $\mu^{2}h^{\prime}(\mu^{2}%
)<\delta_{k}$. Furthermore, $d_{k}(0)$ is negative for $\delta_{k}<\mu
^{2}h^{\prime}(\mu^{2})$, thus, $\nu_{+}$ and $\nu_{-}$ have the same sign.
Since $\nu_{\pm}=\gamma_{k}\pm\sqrt{\ast}$, then $\nu_{\pm}$ have the sign of
$\gamma_{k}$. We conclude the result for (b), that is $\gamma_{k}$ is positive
for $k\in\{1,..,[n/2]\}$ and that $\gamma_{n-k}=-\gamma_{k}$.
\end{proof}

Thus, for the lattice with a general potential for $n\geq5$ we have:

\begin{theorem}
For each $k\in\{1,...,n-1\}$ such that $\mu^{2}h^{\prime}(\mu^{2})<\delta_{k}%
$, the relative equilibrium has a global bifurcation of periodic solutions
starting from the period $2\pi/\nu_{+}$ with symmetries $\mathbb{\tilde{Z}%
}_{n}(k)$ \emph{.} Furthermore, this bifurcation is non-admissible or goes to
another equilibrium.

For each $k\in\{1,...,[n/2]\}$ such that $\delta_{k}<\mu^{2}h^{\prime}(\mu
^{2})<\alpha_{k}/2$, the equilibrium has two global bifurcations of periodic
solutions starting from the periods $2\pi/\nu_{+}$ and $2\pi/\nu_{-}$ with
symmetries $\mathbb{\tilde{Z}}_{n}(k)$.
\end{theorem}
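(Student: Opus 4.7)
The strategy is to deduce the theorem from the preceding proposition together with the abstract bifurcation theorem of Section~5, applied block-by-block. First I would record the non-degeneracy of $\sigma$: the standing assumption $\mu \neq \mu_1,\dots,\mu_{n-1}$ gives $\mu^2 h'(\mu^2) \neq \delta_k$ for every $k$ and, in particular, $h'(\mu^2) \neq 0$, so $\sigma = \operatorname{sgn}(h'(\mu^2)) \in \{\pm 1\}$. Combined with the equality $\eta_k(\nu_\pm) = \pm\sigma$ from the proposition, this makes every positive $\nu_\pm$ produced there a genuine jump of the orthogonal index, so the hypothesis $\eta_k(\nu_k) \neq 0$ of the bifurcation theorem is automatic.

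Next I would split into the two cases. Under $\mu^2 h'(\mu^2) < \delta_k$, part~(a) of the proposition identifies $\nu_+$ as the unique positive crossing of the $k$-th block, with jump $\sigma \neq 0$; the bifurcation theorem then produces a global branch of $2\pi$-periodic solutions of $f(x)=0$ bifurcating from $2\pi/\nu_+$, and every element of that branch carries the isotropy $\mathbb{\tilde{Z}}_{n}(k)$ inherited from the irreducible representation $W_k$. Under $\delta_k < \mu^2 h'(\mu^2) < \alpha_k/2$, part~(b) provides two positive crossings $\nu_\pm$ with jumps $\pm\sigma$, and applying the bifurcation theorem twice yields the two asserted branches with the same isotropy.

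The slightly more delicate point is the second sentence of case~(a): the branch is non-admissible or terminates at a different equilibrium. For this I would invoke the global-bifurcation alternative recalled in Section~5: along any bounded admissible global branch that returns to the starting equilibrium, the signed jumps $\eta_k$ summed over the bifurcation points of the $k$-block lying on the branch must vanish. In case~(a) the only positive crossing of the $k$-block is $\nu_+$, contributing the single jump $\sigma \neq 0$, with nothing in the same block at a positive frequency to cancel it. The branch must therefore either fail to be admissible (escape in $H^1$-norm or in period) or else reach a distinct relative equilibrium whose own block spectrum supplies the missing cancellation. In case~(b) no such stronger dichotomy is asserted, precisely because the jumps $+\sigma$ at $\nu_+$ and $-\sigma$ at $\nu_-$ can cancel each other along a single closed bounded branch.

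The main obstacle is conceptual rather than computational: one must verify that the $k$-th block really carries an independent orthogonal index to which the sum-of-jumps bookkeeping applies. This is legitimate because the $\Gamma_{\bar a} \times S^1$ action diagonalizes $M(l\nu)$ through $P^{-1}M(\nu)P = \operatorname{diag}(m_1(\nu),\dots,m_n(\nu))$, with distinct isotropy labels $\mathbb{\tilde{Z}}_{n}(k)$ on each $W_k$; hence the orthogonal index and its jumps split block-by-block, and the two assertions of the theorem are independent statements about blocks indexed by $k$ and by $k' \neq k$.
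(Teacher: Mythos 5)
Your proposal is correct and follows essentially the same route as the paper, which states this theorem as a direct corollary of the preceding proposition (positivity of $\nu_{\pm}$ and $\eta_{k}(\nu_{\pm})=\pm\sigma$) combined with the abstract orthogonal-degree bifurcation theorem of Section~5 and its sum-of-jumps alternative for the non-admissibility claim. The block-by-block bookkeeping via $P^{-1}M(\nu)P=\mathrm{diag}(m_{1}(\nu),\dots,m_{n}(\nu))$ and the isotropy labels $\mathbb{\tilde{Z}}_{n}(k)$ is exactly the mechanism the paper relies on.
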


\begin{remark}
We have proven that the determinant of the block $m_{k}(\nu)$ is zero at two
values for $k\in\{1,...,n-1\}$ if $\mu^{2}h^{\prime}(\mu^{2})<\alpha_{k}/2$.
Thus, the block $m_{k}$ corresponds to stable solutions of the linear equation
for $\mu^{2}h^{\prime}(\mu^{2})<\alpha_{k}/2$. On the other hand, $\det
m_{n}(\nu)$ has a double zero $\nu=0$, but the block $m_{n}$ gives stable
solutions due to the spatial symmetries of the problem. Using the fact that
$\alpha_{k}$ are increasing for $k\in\{1,...,[n/2]\}$, we conclude that the
solution $\bar{a}$ is linearly stable for
\[
\mu^{2}h^{\prime}(\mu^{2})<\alpha_{1}/2.
\]

\end{remark}

The only remaining case is $n=3$. The proofs are similar to the previous case,
by taking the reverse inequalities.

\begin{proposition}
For $n=3$, the determinant, $\det m_{k}$, is zero only at $\nu_{\pm}$ for
$\alpha_{k}/2<\mu^{2}h^{\prime}(\mu^{2})$. In this case one has that $\eta
_{k}(\nu_{\pm})=\mp\sigma$ with $\sigma=sgn(h^{\prime}(\mu^{2}))$.
Furthermore, the values $\nu_{\pm}$ are positive only in the following cases:

\begin{description}
\item[(a)] If $\nu_{+}$ is positive for $k\in\{1,2\}$ and $0<\mu^{2}h^{\prime
}(\mu^{2})$.

\item[(b)] If $\nu_{+}$ and $\nu_{-}$ are positive for $\alpha_{1}/2<\mu
^{2}h^{\prime}(\mu^{2})<0$.
\end{description}
\end{proposition}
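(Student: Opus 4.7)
The plan is to transcribe the proof of the $n\ge 5$ proposition but to reverse the sign of every inequality controlled by $\alpha_k$, since for $n=3$ one has $\zeta=2\pi/3$ and hence $\cos\zeta=-1/2$, making $\alpha_k=4\cos\zeta\sin^2(k\zeta/2)<0$ for $k=1,2$. The determinant
\[
d_k(\nu)=\alpha_k(\alpha_k-2\mu^2 h'(\mu^2))-(\gamma_k-\nu)^2
\]
has two real roots $\nu_\pm$ exactly when $\alpha_k(\alpha_k-2\mu^2 h'(\mu^2))\ge 0$, which with $\alpha_k<0$ now translates into $\mu^2 h'(\mu^2)\ge\alpha_k/2$, the reverse of the condition used for $n\ge 5$.

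Next I would examine the trace $T_k=2\mu^2 h'(\mu^2)-2\alpha_k$. Under the hypothesis $\mu^2 h'(\mu^2)>\alpha_k/2$ together with $\alpha_k<0$, one gets $T_k>-\alpha_k>0$. Combined with $d_k(\nu)>0$ on $(\nu_-,\nu_+)$, this forces both eigenvalues of $m_k(\nu)$ to be positive on that interval, so $n_k(\nu)=0$ inside and $n_k(\nu)=1$ outside. Crossing $\nu_-$ the Morse index drops by one and crossing $\nu_+$ it rises by one, giving $\eta_k(\nu_-)=\sigma(1-0)=\sigma$ and $\eta_k(\nu_+)=\sigma(0-1)=-\sigma$; that is, $\eta_k(\nu_\pm)=\mp\sigma$ as claimed.

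For the positivity statements I would use the identity $d_k(0)=2\alpha_k(\delta_k-\mu^2 h'(\mu^2))$. A direct computation with $\zeta=2\pi/3$ gives $\alpha_k^2=\gamma_k^2=9/4$ for $k=1,2$, so $\delta_1=\delta_2=0$. In case (a) the assumption $\mu^2 h'(\mu^2)>0=\delta_k$ together with $\alpha_k<0$ yields $d_k(0)>0$, so $0\in(\nu_-,\nu_+)$ and $\nu_+>0$ for both $k=1,2$. In case (b) the assumption $\mu^2 h'(\mu^2)<0=\delta_k$ yields $d_k(0)<0$, so $\nu_-$ and $\nu_+$ lie on the same side of $0$, namely the side of $\gamma_k$; since $\gamma_1=2\sin^2\zeta>0$ both roots are positive for $k=1$, while $\gamma_2=4\sin^2\zeta\cos\zeta<0$ makes them both negative for $k=2$, which is why case (b) does not list $k=2$.

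I do not expect any genuine obstacle here, since the author explicitly signals that the argument is the $n\ge 5$ proof with the relevant inequalities reversed. The only delicate bookkeeping is to track the sign flips introduced by $\alpha_k<0$, and to notice that $\delta_k$ collapses to $0$ for $n=3$, which is precisely what turns the boundary between cases (a) and (b) into the clean threshold $\mu^2 h'(\mu^2)=0$.
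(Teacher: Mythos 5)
Your proposal is correct and follows essentially the same route as the paper: reverse the inequalities forced by $\alpha_k<0$, use the trace bound $T_k>-\alpha_k>0$ together with the sign of $d_k(\nu)$ to get $n_k=0$ on $(\nu_-,\nu_+)$ and $n_k=1$ outside, and then decide the signs of $\nu_\pm$ from $d_k(0)$, $\delta_1=\delta_2=0$, and the signs of $\gamma_1=-\gamma_2$. The only difference is cosmetic: you make the identity $d_k(0)=2\alpha_k(\delta_k-\mu^2h'(\mu^2))$ and the exclusion of $k=2$ in case (b) slightly more explicit than the paper does.
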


\begin{proof}
Since $\alpha_{k}<0$, then $\det m_{k}$ is zero at $\nu_{\pm}$ if $\alpha
_{k}/2<\mu^{2}h^{\prime}(\mu^{2})$. In this case the trace of $m_{k} $ is
\[
T_{k}(\lambda)=2\mu^{2}h^{\prime}(\left\vert \mu\right\vert ^{2})-2\alpha
_{k}>-\alpha_{k}>0\text{.}%
\]
Hence, $n_{k}(\nu)=0$ at $(\nu_{-},\nu_{+})$ and $n_{1}(\nu)=1$ on the
complement. Thus, $\eta_{k}(\nu_{-})=\sigma(1-0)$ and $\eta_{k}(\nu
_{+})=\sigma(0-1)$.

Furthermore, $\nu_{+}$ is positive when $d_{k}(0)$ is positive. Since
$\alpha_{1}=-\gamma_{1}$, then $\delta_{1}=\delta_{2}=0$. We conclude the
result for (a) from the fact that $d_{k}(0)$ is positive for $0=\delta_{k}%
<\mu^{2}h^{\prime}(\mu^{2})$ . Also, $d_{k}(0)$ is negative for $\mu
^{2}h^{\prime}(\mu^{2})<\delta_{k}=0$, then $\nu_{+}$ and $\nu_{-}$ have the
same sign. Since $\nu_{\pm}=\gamma_{k}\pm\sqrt{\ast}$, then $\nu_{\pm}$ have
the sign of $\gamma_{k}$. We get the result for (b), from $\gamma_{1}%
=2\sin^{2}\zeta>0$ and that $\gamma_{2}=-\gamma_{1}$.
\end{proof}

\emph{Hence, for the lattice and} $n=3$\emph{, the relative equilibrium has,
for each} $k=1,2$, \emph{if} $0<\mu^{2}h^{\prime}(\mu^{2})$\emph{, a global
bifurcation of periodic solutions starting from the period} $2\pi/\nu_{+}$
\emph{with symmetries} $\mathbb{\tilde{Z}}_{3}(k)$\emph{.} Furthermore, this
bifurcation is inadmissible or goes to another equilibrium.

\emph{For }$n=3$\emph{\ the relative equilibrium, for }$\alpha_{1}/2<\mu
^{2}h^{\prime}(\mu^{2})<0$\emph{,\ has global bifurcations of periodic
solutions starting from the periods }$2\pi/\nu_{+}$\emph{\ and }$2\pi/\nu_{-}%
$\emph{\ with symmetries} $\mathbb{\tilde{Z}}_{3}(1)$\emph{.}

For $n=3$, the relative equilibrium is linearly stable for $\alpha_{1}%
/2<\mu^{2}h^{\prime}(\mu^{2})$.

\subsubsection{Schr\"{o}dinger potential}

For the lattice for the cubic Schr\"{o}dinger potential, we have $h(\mu)=\mu$
with%
\[
h^{\prime}(\mu^{2})=1\text{ and }\sigma=1.
\]
For $n\geq5$, one has the following cases:

For $k\in\{1,2,n-2,n-1\}$ we have $\delta_{k}\leq0$, then condition (a) is
never satisfied and, for $\mu\in(0,\sqrt{\alpha_{k}/2})$, condition (b) holds.

For $k\in\{3,...,n-3\}$ we have $\delta_{k}>0$, then, for $\mu\in
(0,\sqrt{\delta_{k}})$, condition (a) is satisfied, while, for $\mu\in
(\sqrt{\delta_{k}},\sqrt{\alpha_{k}/2})$, condition (b) holds.

\begin{theorem}
Thus, for the lattice with a Schr\"{o}dinger potential, for $n\geq6$, the
relative equilibrium has, for each  $k\in\{3,...,n-3\}$ and $\mu\in
(0,\sqrt{\delta_{k}})$, a global bifurcation of periodic solutions, starting
from the period $2\pi/\nu_{+}$ with symmetries $\mathbb{\tilde{Z}}_{n}(k)$.
Furthermore, this bifurcation is inadmissible or goes to another equilibrium.

For $n\geq5$\ the relative equilibrium has, for each $k\in\{1,2\}$ such that
$\mu\in(0,\sqrt{\alpha_{k}/2})$ and, for each $k\in\{3,...,[n/2]\}$ such that
$\mu\in(\sqrt{\delta_{k}},\sqrt{\alpha_{k}/2})$, two global bifurcations of
periodic solutions starting from the periods $2\pi/\nu_{+}$ and $2\pi/\nu_{-}$
with symmetries $\mathbb{\tilde{Z}}_{n}(k)$.
\end{theorem}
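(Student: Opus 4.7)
The plan is to specialize the preceding proposition for $n\geq5$ to the cubic Schr\"odinger potential $h(\mu)=\mu$. In this case $h'(\mu^{2})=1$, so $\sigma=\mathrm{sgn}(h'(\mu^{2}))=+1$ and the control quantity $\mu^{2}h'(\mu^{2})$ reduces to $\mu^{2}$. Alternative (a) of that proposition then reads $k\in\{1,\ldots,n-1\}$ with $\mu^{2}<\delta_{k}$, while alternative (b) reads $k\in\{1,\ldots,[n/2]\}$ with $\delta_{k}<\mu^{2}<\alpha_{k}/2$. The whole statement therefore depends on where $\delta_{k}$ sits relative to $0$.

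First I would pin down the sign of $\delta_{k}$ as a function of $k$. Since $\alpha_{k}=4\cos\zeta\sin^{2}(k\zeta/2)>0$ for $n\geq5$ and $k\in\{1,\ldots,n-1\}$, the sign of $\delta_{k}=(\alpha_{k}^{2}-\gamma_{k}^{2})/(2\alpha_{k})$ agrees with the sign of $\alpha_{k}^{2}-\gamma_{k}^{2}$. Using $\sin(k\zeta)=2\sin(k\zeta/2)\cos(k\zeta/2)$ together with the addition formulas
\[
\cos\zeta\,\sin(k\zeta/2)\pm\sin\zeta\,\cos(k\zeta/2)=\sin\bigl((k\pm2)\zeta/2\bigr),
\]
I would factor
\[
\alpha_{k}^{2}-\gamma_{k}^{2}=16\sin^{2}(k\zeta/2)\,\sin\bigl((k-2)\zeta/2\bigr)\,\sin\bigl((k+2)\zeta/2\bigr).
\]
A quick inspection of the arguments (all of the form $j\pi/n$ with $j$ an integer) then gives $\delta_{1},\delta_{n-1}<0$, $\delta_{2}=\delta_{n-2}=0$, and $\delta_{k}>0$ for $k\in\{3,\ldots,n-3\}$.

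With the sign table in hand, the theorem follows by direct substitution into cases (a) and (b). Case (a) is non-vacuous only when $\delta_{k}>0$, so $k\in\{3,\ldots,n-3\}$, which in turn requires $n\geq6$; on $\mu\in(0,\sqrt{\delta_{k}})$ it produces a single global branch from $2\pi/\nu_{+}$ with isotropy $\mathbb{\tilde{Z}}_{n}(k)$. The extra clause that this branch is non-admissible or reaches another equilibrium is inherited from the global bifurcation theorem: the only jump associated to the original equilibrium in this regime is $\eta_{k}(\nu_{+})=+1$, so the signed sum of jumps cannot close on the original equilibrium. Case (b) then splits according to the sign of $\delta_{k}$: for $k\in\{1,2\}$ the lower bound $\delta_{k}\le0$ collapses the condition $\delta_{k}<\mu^{2}$ to $\mu>0$, giving $\mu\in(0,\sqrt{\alpha_{k}/2})$; for $k\in\{3,\ldots,[n/2]\}$ one keeps the full range $\mu\in(\sqrt{\delta_{k}},\sqrt{\alpha_{k}/2})$. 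In either subcase both $\nu_{\pm}$ are positive and two global branches bifurcate from $2\pi/\nu_{\pm}$ with isotropy $\mathbb{\tilde{Z}}_{n}(k)$; the two jumps are of opposite sign $\pm1$, which is why no further admissibility statement is imposed.

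The main obstacle is really just the trigonometric factorization that pins down the sign of $\delta_{k}$; once that is available, assembling the statement is bookkeeping across cases (a) and (b) of the previous proposition.
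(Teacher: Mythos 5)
Your proposal is correct and follows essentially the same route as the paper: specialize the $n\geq5$ proposition to $h(\mu)=\mu$, so that $\sigma=1$ and the control quantity becomes $\mu^{2}$, determine the sign of $\delta_{k}$ in $k$, and read off cases (a) and (b). The only difference is that you supply the trigonometric factorization $\alpha_{k}^{2}-\gamma_{k}^{2}=16\sin^{2}(k\zeta/2)\sin((k-2)\zeta/2)\sin((k+2)\zeta/2)$ justifying the sign table for $\delta_{k}$, which the paper asserts without proof; your factorization and the resulting signs are correct.
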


For $n\geq5$, the relative equilibrium is linearly stable for the amplitudes
$\mu\in(0,\sqrt{\alpha_{1}/2})$.

For $n=3$ and $k\in\{1,2\}$, we have that condition (a) is always satisfied.
\emph{For} $n=3$,\emph{\ the relative equilibrium has, for each} $k\in\{1,2\}
$ \emph{such that} $\mu\in(0,\infty)$\emph{, a global bifurcation of periodic
solutions starting from the period} $2\pi/\nu_{+}$ \emph{with symmetries}
$\mathbb{\tilde{Z}}_{3}(k)$\emph{.} Furthermore, for $n=3$, this relative
equilibrium is linearly stable for the amplitudes $\mu\in(0,\infty)$.

\subsubsection{Saturable potential}

For the lattice with the saturable potential, one has $h(x)=(1+x)^{-1}$, with%
\[
h^{\prime}(\mu^{2})=-(1+\mu^{2})^{-2}\text{ and }\sigma=-1\text{.}%
\]
$\ $For $n\geq5$ we have the following cases:

For $k\in\{2,...,n-2\}$ one has $\delta_{k}\geq0$, then, for each $\mu
\in(0,\infty)$ one gets $\mu^{2}h^{\prime}(\mu^{2})<\delta_{k}$, that is
condition (a).

For $n\in\{16,17,...\}$, let $\mu_{-}\in(0,1)$ and $\mu_{+}\in(1,\infty)$ be
the solutions of $\mu^{2}h^{\prime}(\mu^{2})=\delta_{1}$. Then, for $\mu
\in(0,\mu_{-})\cup(\mu_{+},\infty)$, one has $\delta_{1}<\mu^{2}h^{\prime}%
(\mu^{2})$, that is condition (b) and, for $\mu\in(\mu_{-},\mu_{+})$ one has
$\mu^{2}h^{\prime}(\mu^{2})<\delta_{1}$, that is condition (a). Furthermore,
for $n=\{5,...,15\}$ one has the same result as before provided one defines
$\mu_{-}$ and $\mu_{+}$ to be zero.

\begin{theorem}
Thus, for the lattice with the saturable potential, for $n\geq5$, the relative
equilibrium has, for each $k\in\{2,...,n-2\}\ $and $\mu\in(0,\infty)$ and, for
each $k\in\{1,n-1\}\ $and $\mu\in(\mu_{-},\mu_{+})$, a global bifurcation of
periodic solutions starting from the period $2\pi/\nu_{+}$ with symmetries
$\mathbb{\tilde{Z}}_{n}(k)$. This bifurcation is inadmissible or goes to
another equilibrium.

Moreover, the relative equilibrium has, for each $\mu\in(0,\mu_{-})\cup
(\mu_{+},\infty)$, two global bifurcations of periodic solutions starting from
the periods $2\pi/\nu_{+}$ and $2\pi/\nu_{-}$ with symmetries $\mathbb{\tilde
{Z}}_{n}(1)$.
\end{theorem}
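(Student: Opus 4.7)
The plan is to apply the preceding $n\geq 5$ proposition directly to the saturable nonlinearity $h(x)=(1+x)^{-1}$. Set $g(\mu):=\mu^{2}h'(\mu^{2})=-\mu^{2}/(1+\mu^{2})^{2}$ and observe the basic features that drive everything: $\sigma=-1$, the range of $g$ on $(0,\infty)$ is $[-1/4,0)$ with minimum attained only at $\mu=1$ (and $g$ strictly decreasing on $(0,1]$, strictly increasing on $[1,\infty)$), and $g(\mu)<0<\alpha_{k}/2$ for every $k\in\{1,\dots,n-1\}$. The last inequality makes the basic hypothesis $\mu^{2}h'(\mu^{2})<\alpha_{k}/2$ of the previous proposition automatic, so only the comparison of $g(\mu)$ with $\delta_{k}$ needs to be carried out.

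I would next determine the sign of $\delta_{k}$. Using $\alpha_{k}=4\cos\zeta\sin^{2}(k\zeta/2)$ and $\gamma_{k}=2\sin(k\zeta)\sin\zeta$, a short trigonometric computation gives
\[
\alpha_{k}^{2}-\gamma_{k}^{2}=16\sin^{2}(k\zeta/2)\bigl[\cos^{2}\zeta\sin^{2}(k\zeta/2)-\sin^{2}\zeta\cos^{2}(k\zeta/2)\bigr],
\]
whose sign agrees with that of $\tan^{2}(k\zeta/2)-\tan^{2}\zeta$. Combined with the monotonicity of $\tan$ on $(0,\pi/2)$, the identity $k\zeta/2=k\pi/n$, and the symmetry $\delta_{n-k}=\delta_{k}$, this shows that $\delta_{k}\geq 0$ exactly for $k\in\{2,\dots,n-2\}$, while $\delta_{1}=\delta_{n-1}<0$. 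For each such $k\in\{2,\dots,n-2\}$, $g(\mu)<0\leq\delta_{k}$ on all of $(0,\infty)$, so condition (a) of the previous proposition yields a single global branch from $\nu_{+}$ with isotropy $\tilde{\mathbb{Z}}_{n}(k)$.

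For $k\in\{1,n-1\}$ I would specialize the formula above to obtain $\delta_{1}=-(1-\cos\zeta)(1+2\cos\zeta)/\cos\zeta$. The level equation $g(\mu)=\delta_{1}$ admits two solutions $\mu_{-}<1<\mu_{+}$ iff $\delta_{1}>-1/4$, and direct evaluation at $\zeta=2\pi/n$ shows that this crossover occurs precisely between $n=15$ and $n=16$, giving the threshold $n\geq 16$. On $(\mu_{-},\mu_{+})$ one has $g(\mu)<\delta_{1}$, so condition (a) gives a single $\nu_{+}$-branch with isotropy $\tilde{\mathbb{Z}}_{n}(k)$ for each of $k=1,n-1$; on $(0,\mu_{-})\cup(\mu_{+},\infty)$ one has $\delta_{1}<g(\mu)<\alpha_{1}/2$, and since $\gamma_{n-1}=-\gamma_{1}<0$ forces $\nu_{\pm}$ negative when $k=n-1$, condition (b) produces two positive frequencies $\nu_{\pm}$ only for $k=1$. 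For $5\leq n\leq 15$, $\delta_{1}<-1/4\leq g(\mu)$ for every $\mu>0$, so condition (b) applies uniformly, which is what the convention $\mu_{\pm}=0$ encodes. The main obstacle is the quantitative verification of the $n=16$ threshold via the closed form for $\delta_{1}$; beyond that, the argument is just sign bookkeeping about where the monotone function $g$ crosses the levels $\delta_{k}$.
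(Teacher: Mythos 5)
Your proposal is correct and follows essentially the same route as the paper: reduce to the general $n\geq 5$ proposition by comparing $g(\mu)=\mu^{2}h'(\mu^{2})=-\mu^{2}/(1+\mu^{2})^{2}$ with the levels $\delta_{k}$ and $\alpha_{k}/2$, using $\sigma=-1$, the sign pattern $\delta_{k}\geq 0$ for $k\in\{2,\dots,n-2\}$ versus $\delta_{1}=\delta_{n-1}<0$, and the minimum value $-1/4$ of $g$ at $\mu=1$ to locate $\mu_{\pm}$ and the $n=16$ threshold. The only difference is that you supply the trigonometric and numerical verifications (the closed form for $\delta_{1}$ and the crossover between $n=15$ and $n=16$) that the paper states without computation, and these check out.
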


For $n\geq5$, the relative equilibrium is linearly stable for all amplitudes
$\mu\in(0,\infty)$.

For $n=3$, we have that $\alpha_{1}/2=-3/4$, hence, $\alpha_{1}/2<-1/4<\mu
^{2}h^{\prime}(\mu^{2})$ for all $\mu\in(0,\infty)$. Thus, condition (b) is
always satisfied. \emph{For} $n=3$,\emph{\ the relative equilibrium has, for
each } $\mu\in(0,\infty)$\emph{, global bifurcations of periodic solutions
starting from the periods} $2\pi/\nu_{+}$ \emph{and} $2\pi/\nu_{-}$ \emph{with
symmetries} $\mathbb{\tilde{Z}}_{3}(1)$\emph{. } Furthermore, for $n=3$, the
relative equilibrium is linearly stable for the amplitudes $\mu\in(0,\infty)$.


\begin{thebibliography}{99}


\bibitem{BaKrSt06} 
\newblock Z.~Balanov, W.~Krawcewicz and H.~Steinlein,
\newblock ``Applied Equivariant Degree,"
\newblock AIMS Series on Differential Equations \& Dynamical Systems, 1. American Institute of Mathematical Sciences (AIMS), 2006.

\bibitem{EJ03}
\newblock J. C. Eilbeck and M.~Johansson,
\newblock \emph{The discrete nonlinear {Schr{\"o}dinger} equation -- 20 years on},
\newblock in ``Proceedings of the 3rd Conference on Localization and Energy Transfer in Nonlinear Systems" (editor, Luis V{\'a}zquez), 44--67. NJ: World Scientific, Singapore, (2003).

\bibitem{Ga10}
\newblock C.~Garc\'{\i}a-Azpeitia,
\newblock ``Aplicaci\'{o}n del Grado Ortogonal a la Bifurcaci\'{o}n en Sistemas Hamiltonianos,"
\newblock UNAM. PhD thesis, 2010.

\bibitem{GaIz11} 
\newblock C.~Garc\'{\i}a-Azpeitia and J.~Ize,
\newblock \emph{Global bifurcation of polygonal relative equilibria for masses, vortices and dNLS oscillators},
\newblock Journal of Differential Equations, \textbf{251} (2011), 3202--3227.

\bibitem{GaIz112}
\newblock C.~Garc\'{\i}a-Azpeitia and J.~Ize,
\newblock \emph{Global bifurcation of planar and spatial periodic solutions from the polygonal relative equilibria for the n-body problem},
\newblock Preprint, 2012.

\bibitem{GaIz113} 
\newblock C.~Garc\'{\i}a-Azpeitia and J.~Ize,
\newblock \emph{Bifurcation of periodic solutions from a ring configuration in the vortex and filament problems},
\newblock Journal of Differential Equations, \textbf{252} (2012), 5662--5678.

\bibitem{Iz95} 
\newblock J.~Ize,
\newblock \emph{Topological bifurcation},
\newblock in ``Topological Nonlinear Analysis" Progr. Nonlinear Differential Equations Appl., \textbf{15}, 341--463. Birkh{\"a}user Boston, (1995).

\bibitem{IzVi03}
\newblock J.~Ize and A.~Vignoli,
\newblock ``Equivariant Degree Theory,"
\newblock De Gruyter Series in Nonlinear Analysis and Applications, \textbf{8}. Walter de Gruyter, Berlin, 2003.

\bibitem{Jo04} 
\newblock M.~Johansson,
\newblock \emph{Hamiltonian {Hopf} bifurcations in the discrete nonlinear {Schr{\"o}dinger} trimer: oscillatory instabilities, quasiperiodic solutions and a 'new' type of self-trapping transition},
\newblock {J. Phys. A: Math. Gen.}, \textbf{37} (2004), 2201--2222.

\bibitem{MA94} 
\newblock R.~S. MacKay and S.~Aubry,
\newblock \emph{Proof of existence of breathers for time-reversible or hamiltonian networks of weakly coupled oscillators},
\newblock {Nonlinearity}, \textbf{7} (1994), 1623--1643.

\bibitem{Pan10} 
\newblock P.~Panayotaros,
\newblock \emph{Continuation of normal modes in finite {NLS} lattices},
\newblock {Phys. Lett. A}, \textbf{374} (2010), 3912--1919.

\bibitem{PD09} 
\newblock C. L. Pando and E. J Doedel,
\newblock \emph{Bifurcation structures and dominant models near relative equilibria in the one-dimensional discrete nonlinear {Schr{\"o}dinger} equation},
\newblock {Physica D.}, \textbf{238} (2009), 687--698.

\bibitem{Ry05}
\newblock S.~Rybicki,
\newblock \emph{Degree for equivariant gradient maps},
\newblock {Milan J. Math.}, \textbf{73} (2005), 103--144.



\end{thebibliography}

\end{document}